\renewcommand{\subsection}{\@startsection
{subsection}{2}{0mm}{\baselineskip}{-0.25cm}
{\normalfont\normalsize\em}}
\newtheorem{theorem}{Theorem}%[section]
\newtheorem{proposition}[theorem]{Proposition}
\theoremstyle{definition}
\newtheorem{example}[theorem]{Example}
\theoremstyle{remark}
\newtheorem{remark}[theorem]{Remark}
\def\Ap{{\rm Ap}}
\def\Kunz{{\rm Kunz}}
\def\N {\mathbb{N}}
\def\Z {\mathbb{Z}}
\def\l {\ell}
\title[A generalization of a theorem about gapsets with depth at most three]{A generalization of a theorem about gapsets with depth at most three}
\author[M. Bernardini]{Matheus Bernardini}
\address{Universidade de Bras\'{i}lia, Faculdade do Gama, Bras\'{i}lia, DF, Brazil}
\email{matheusbernardini@unb.br}
\author[P. Melo]{Patrick Melo}
\address{Universidade de Bras\'{i}lia, Faculdade do Gama, Bras\'{i}lia, DF, Brazil}
\email{patrickmelo13@gmail.com}
\thanks{{\em 2020 Math. Subj. Class.}: Primary 20M14, 05A15;  Secondary 05A19}
\thanks{{\em Keywords}: numerical semigroup, gapset, Kunz coordinates, depth, level}
\begin{document}

%%%%%%%%%%%%%%%%%%%%%%%%%%%%%%%%%%%%%%%%%%%%%%%%%%%%%%%%%%%%%%%%%%%%%%%%%%%%%%%%%%%%%%%%%%%%%%%%%%%%%%%%%%%%%%%%%%%%%%%%%

\begin{abstract} 
In this paper, we provide a generalization of a theorem proved by Eliahou and Fromentin, which exhibit a remarkable property of the sequence $(n'_g)$, where $n'_g$ denotes the number of gapsets with genus $g$ and depth at most $3$.
\end{abstract}

\maketitle

\section{Introduction}\label{s1}

Let $\N$ be the set of positive integers and $\N_0 = \N \cup \{0\}$. A \textit{numerical semigroup} $S$ is a submonoid of $\N_0$, equipped with the usual addition, such that $G(S):=\N_0\setminus S$, the set of {\em gaps} of $S$, is finite. In \cite{EF}, the concept of a gapset was formally introduced in the following way. A \textit{gapset} is a finite set $G \subset \N$ satisfying the following property: let $z \in G$ and write $z = x + y$, with $x$ and $y \in \N$; then $x \in G$ or $y \in G$. There is a bijective map between the set of numerical semigroups and the set of gapsets given by $S \mapsto \N_0 \setminus S$. Thus, a gapset is the set of gaps of some numerical semigroup and one can define some invariants of a gapset by using the invariants of its complement in $\N_0$. For instance, the \textit{genus}, the \textit{multiplicity}, the \textit{conductor} and the \textit{depth} of a gapset $G$ are $g(G) = \# G, m(G) := \min\{s \in \N: s \notin G\}$, $c(G) := \min\{s \in \N: s + n \notin G, \forall n \in \N_0\}$ and $q(G) := \left \lceil \frac{c(G)}{m(G)} \right \rceil$, respectively. Observe that, if $G$ is nonempty, then $c(G) = \max(G) + 1$.

A central problem in numerical semigroup theory is Bras-Amorós' conjecture, which was originally stated with three items (see \cite{Amoros1}). It consists of understanding the behaviour of the sequence $(n_g)$\footnote{This sequence is registered as A007323 at OEIS.}, where $n_g$ denotes the number of gapsets (or numerical semigroups) with a fixed genus $g$. Two of the items of the conjecture are about asymptotic behaviour of $n_g$ ((1) $\lim_{g \to \infty} \frac{n_{g}}{n_{g-1}} = \varphi$, the golden ratio, and (2) $\lim_{g \to \infty} \frac{n_{g-1} + n_{g-2}}{n_{g}} = 1$; observe that $(1) \Rightarrow (2)$) and were proved by Zhai \cite{Zhai}. The key ingredient of the proof was observing that ``almost all" gapsets with a fixed genus have depth at most $3$. The only item of Bras-Amorós' conjecture that remains as an open problem is ``is it true that $n_g + n_{g+1} \leq n_{g+2}$, for all $g$?". Also, a weaker version of this conjecture, namely ``is $(n_g)$ a non-decreasing sequence?", is still an open problem. See \cite{Kaplan2} for more details.

Eliahou and Fromentin \cite{EF} studied a related problem to this one. They proved a remarkable property about the behaviour of the sequence $(n'_g)$, where $n'_g$ denotes the number of gapsets with a fixed genus $g$ and depth at most 3 (we denote by $\Gamma'(g)$ the set of gapsets with genus $g$ and depth at most $3$). It is stated as follows.

\begin{theorem}[\cite{EF}]
Let $(n'_g)$ be the sequence of the number of gapsets with genus $g$ and depth at most $3$. Then $n'_{g-1} + n'_{g-2} \leq n'_g \leq n'_{g-1} + n'_{g-2} + n'_{g-3}$.
\label{EliFro}
\end{theorem}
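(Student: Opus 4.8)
The plan is to pass to Kunz coordinates, where depth at most $3$ becomes a transparent condition, and to extract from there a recursion for $n'_g$ refined by the multiplicity.

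Recall that a numerical semigroup $S$ of multiplicity $m$ is encoded by its Kunz coordinates $(k_1,\dots,k_{m-1})\in\N^{m-1}$, with $k_i=(w_i-i)/m$ for $w_i$ the least element of $S$ congruent to $i$ modulo $m$; that $(k_1,\dots,k_{m-1})$ arises in this way exactly when the Kunz inequalities $k_i+k_j\ge k_{i+j}$ (for $i+j<m$) and $k_i+k_j+1\ge k_{i+j-m}$ (for $i+j>m$) hold; that $k_i\ge 1$ for all $i$; that the associated gapset has genus $k_1+\dots+k_{m-1}$ (in residue class $i$ the gaps are $i,m+i,\dots,(k_i-1)m+i$); and that its depth is at most $3$ precisely when $k_1,\dots,k_{m-1}\in\{1,2,3\}$. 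Thus $\Gamma'(g)=\bigsqcup_{m\ge 1}\Gamma'_m(g)$, where $\Gamma'_m(g)$ is the set of tuples in $\{1,2,3\}^{m-1}$ satisfying the Kunz inequalities with $\sum_i k_i=g$; writing $n'_{m,g}=\#\Gamma'_m(g)$ we have $n'_g=\sum_{m\ge 1}n'_{m,g}$ (with $\Gamma'_1(0)$ the empty tuple, so $n'_{1,0}=1$).

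First I would note that, once the entries lie in $\{1,2,3\}$, the Kunz inequalities reduce to a single family: the inequality with $i+j>m$ is automatic since its left-hand side is $\ge 3$, while the one with $i+j<m$ can fail only if $k_i=k_j=1$ and $k_{i+j}=3$. So a tuple in $\{1,2,3\}^{m-1}$ lies in $\Gamma'_m$ iff the sumset $P+P$, with $P=\{i\in[1,m-1]:k_i=1\}$, is disjoint from $\{i:k_i=3\}$. The key step is then the following extension lemma: given $(k_1,\dots,k_{m-1})\in\{1,2,3\}^{m-1}$ and $k_m\in\{1,2,3\}$, the tuple $(k_1,\dots,k_m)$ (of multiplicity $m+1$) satisfies the Kunz inequalities iff $(k_1,\dots,k_{m-1})\in\Gamma'_m$ and, in the case $k_m=3$, moreover $m\notin P+P$. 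Indeed, among the inequalities of the longer tuple, those not involving the index $m$ are, for entries in $\{1,2,3\}$, equivalent to those of the truncation, and the only ones involving $m$ are those with $i+j=m$, namely $k_i+k_j\ge k_m$, which hold automatically when $k_m\le2$ but when $k_m=3$ amount exactly to there being no pair $i,m-i$ with $k_i=k_{m-i}=1$. Since extending by $k_m$ raises the genus by $k_m$, this yields, for every $m\ge1$ and every $g$,
\[
n'_{m+1,g}=n'_{m,g-1}+n'_{m,g-2}+\widetilde n_{m,g-3},
\]
where $\widetilde n_{m,h}$ counts those tuples of $\Gamma'_m(h)$ with the extra property $m\notin P+P$, so $0\le\widetilde n_{m,h}\le n'_{m,h}$.

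Finally, summing over $m\ge1$ and using $n'_g=\sum_{m\ge1}n'_{m,g}=\sum_{m\ge1}n'_{m+1,g}$ for $g\ge1$ (the multiplicity-$1$ term contributing only at $g=0$), together with $\sum_{m\ge1}n'_{m,k}=n'_k$ for all $k$ under the conventions $n'_k=0$ for $k<0$ and $n'_0=1$, gives
\[
n'_g=n'_{g-1}+n'_{g-2}+\widetilde n_{g-3},\qquad \widetilde n_{g-3}:=\sum_{m\ge1}\widetilde n_{m,g-3},
\]
with $0\le\widetilde n_{g-3}\le n'_{g-3}$, which is precisely the asserted double inequality $n'_{g-1}+n'_{g-2}\le n'_g\le n'_{g-1}+n'_{g-2}+n'_{g-3}$. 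The main obstacle is the key step above: verifying carefully which Kunz inequalities are genuine for $\{1,2,3\}$-tuples, and that enlarging the multiplicity by one introduces exactly one new genuine constraint (the admissibility of $k_m=3$), while keeping the bookkeeping $g=\sum_i k_i$ straight. One should also check the small multiplicities $m=1,2$ by hand to confirm that the recursion and the identity $n'_g=\sum_m n'_{m,g}$ behave correctly at the boundary.
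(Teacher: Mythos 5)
Your proposal is correct and follows essentially the same route as the paper's proof (the case $\l=1$ of its Theorem \ref{EliFroGen}): both arguments rest on the observation that for Kunz tuples with entries in $\{1,2,3\}$ the constraints $X_i+X_j+1\ge X_{i+j-m}$ are automatic, so appending a last coordinate equal to $1$ or $2$ always preserves validity (lower bound) and deleting the last coordinate always preserves validity (upper bound). Your refinement by multiplicity and the exact criterion $m\notin P+P$ for appending a $3$ upgrade the two inequalities to the exact recursion $n'_g=n'_{g-1}+n'_{g-2}+\widetilde n_{g-3}$, which is a nice sharpening but uses the same underlying maps.
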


In that paper, the authors prove Theorem 1 in two parts. The first inequality is a consequence of the construction of two injective maps: one of them has domain given by the set of gapsets with genus $g-1$ and depth at most $3$ and the other one has has domain given by the set of gapsets with genus $g-2$ and depth at most $3$; both of them have codomain given by the set of gapsets with genus $g$ and depth at most $3$ and the images of those maps are disjoint sets. For the second inequality, the authors split the set of gapsets with genus $g$ and depth at most $3$ into three disjoint parts, $\Gamma'_1(g) \ \dot\cup \ \Gamma'_2(g) \ \dot\cup \ \Gamma'_3(g)$, and they construct three injective maps, where $\Gamma'_k(g)$ is mapped into the set of gapsets with genus $g-k$ and depth at most $3$, for $k \in \{1, 2, 3\}$.

In this paper, we present a generalization of Theorem \ref{EliFro}. Here, we construct injective maps inspired by those that have been considered by Eliahou and Fromentin; the main difference is that we analyse how the Kunz coordinates are modified under those maps. In section 2, we introduce the Kunz coordinates for gapsets and we prove that the depth of a gapset is its largest coordinate; moreover, we introduce the notion of level of a gapset. In section 3, we prove a generalization of Theorem \ref{EliFro} by using the Kunz coordinates of a gapset. We prefer the gapset language to write this paper, but it could also be done using numerical semigroup theory.

Here, we observe that similar ideas of a particular case of Theorem \ref{EliFroGen} have been considered, independently, by Zhu \cite{Zhu} (see \textit{Proof of Theorem} 1.4 of his paper).

\section{The Kunz coordinates and the level of a gapset}

In this section, we introduce the Ap\'{e}ry set and the Kunz coordinates of a gapset $G$. Those definitions arise in a natural way when we consider the numerical semigroup $\N_0 \setminus G$.

Let $S$ be a numerical semigroup with multiplicity $m$ (which is the least non zero element of $S$). The Ap\'{e}ry set of $S$ (on $m$) is defined as $\Ap(S) = \{w_0, w_1, \ldots, w_{m-1}\}$, where $w_i = \min\{s \in S: s \equiv i \pmod{m}\}$, for $i \in [0, m-1]$. Notice that $w_0 = 0$ and there is a $k_i \in \N$ such that $w_i = mk_i + i$. The Kunz coordinates of $S$ (on $m$) are \linebreak $\Kunz(S) = (k_1, k_2, \ldots, k_{m-1})$, where $k_i = (w_i - i)/m$. Now, we transfer this terminology to a gapset $G$ with multiplicity $m$; the Ap\'{e}ry set of $G$ (on $m$) and the Kunz coordinates of $G$ (on $m$) are given by $\Ap(G) := \Ap(\N_0 \setminus G)$ and $\Kunz(G) := \Kunz(\N_0 \setminus G)$, respectively. 

\begin{proposition}
Let $G$ be a gapset with multiplicity $m$, such that \linebreak $\Ap(G) = \{w_0, w_1, \ldots, w_{m-1}\}$. Then $w_i = m + \max\{z \in G: z \equiv i \pmod{m}\}$ if $i \neq 0$ and $w_0 = 0$. Moreover, if $\Kunz(G) = (k_1, k_2, \ldots, k_{m-1})$, then \linebreak $k_i = \#\{z \in G: z \equiv i \pmod{m}\}$ for all $i \in [1, m-1] \cap \Z$.
\label{Apery}
\end{proposition}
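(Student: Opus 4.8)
The plan is to work directly with the numerical semigroup $S := \N_0 \setminus G$ and to pin down, inside each residue class modulo $m$, exactly which elements are gaps. The claim $w_0 = 0$ is immediate since $0 \in S$, so fix $i \in [1, m-1] \cap \Z$ and consider the arithmetic progression $i, i+m, i+2m, \ldots$ of all nonnegative integers congruent to $i$ modulo $m$. I would first record that $i \notin S$: since $m$ is the least nonzero element of $S$ and $0 < i < m$, no such $i$ can lie in $S$; hence $i \in G$, the set $\{z \in G : z \equiv i \pmod m\}$ is nonempty, and $w_i \neq i$, so in fact $w_i \geq i + m > i$.

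The key step is to show that the gaps in this class form an initial segment of the progression, namely
\[
\{z \in G : z \equiv i \pmod m\} = \{i, i+m, i+2m, \ldots, w_i - m\}.
\]
For the inclusion $\supseteq$ one only needs the minimality of $w_i$: any element of the progression strictly smaller than $w_i$ lies outside $S$ by definition of $w_i$, hence lies in $G = \N_0 \setminus S$. For the inclusion $\subseteq$ I would use that $S$ is closed under addition together with $m \in S$: this gives $w_i + tm \in S$ for every $t \geq 0$, so every element of the progression that is $\geq w_i$ belongs to $S$ and therefore is not a gap. This is the only place where the semigroup (submonoid) structure is genuinely used, and it is precisely what forces the gaps to accumulate at the bottom of each class.

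Once the displayed description is established, both assertions of the proposition drop out by inspection. The maximum of $\{i, i+m, \ldots, w_i - m\}$ is $w_i - m$, which is exactly $w_i = m + \max\{z \in G : z \equiv i \pmod m\}$ for $i \neq 0$. Counting the elements of $\{i, i+m, \ldots, w_i - m\}$ gives $\bigl((w_i - m) - i\bigr)/m + 1 = (w_i - i)/m = k_i$, which is the claim $k_i = \#\{z \in G : z \equiv i \pmod m\}$.

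I do not anticipate a substantial obstacle here; the argument is entirely elementary. If anything, the step that deserves the most care is the inclusion $\subseteq$ above — that nothing congruent to $i$ and larger than $w_i$ can be a gap — since this is the one point at which the hypothesis that $S$ is a submonoid of $\N_0$, rather than an arbitrary cofinite subset, is essential.
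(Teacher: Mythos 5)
Your proposal is correct and follows essentially the same route as the paper: both arguments show that the gaps of $G$ in the residue class of $i$ modulo $m$ form the initial segment $\{i, i+m, \ldots, w_i - m\}$ of that progression, and then read off the maximum and the cardinality. Your write-up is simply more explicit about the two inclusions (minimality of $w_i$ for one, closure of $S$ under addition of $m$ for the other), which the paper compresses into a single sentence.
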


\begin{proof}
First, observe that $w_0 = 0$. Also, by the definition of $w_i$, we conclude that \linebreak $G \cap \{n \in \N: n \equiv i \pmod{m}\} = \{i, i + m, \ldots, i + (k_i - 1)m\}$, since $mx + i \notin G$ for all $x \geq k_i$. Hence, $w_i = i + (k_i -1)m + m$ and the result follows. Moreover, the set $G \cap \{n \in \N: n \equiv i \pmod{m}\}$ has $k_i$ elements.
\end{proof}

\begin{example}
The gapset $G = \{1, 2, 4, 5, 8, 11\}$ has multiplicity 3. In this case, \linebreak $\Ap(G) = \{0, 7, 14\}$ and $\Kunz(G) = (2,4)$.
\end{example}

One can characterize numerical semigroups with multiplicity $m$ in terms of their Kunz coordinates, namely $\Kunz(S) = (k_1, k_2, \ldots, k_{m-1})$. As a matter of fact, a tuple in $\N^{m-1}$ is the Kunz coordinates of some numerical semigroup with multiplicity $m$ if, and only if, it satisfies the following system of inequalities (cf. \cite{Rosales}):

\begin{equation}
\begin{cases}
X_i \in \N \\
X_i + X_j \geq  X_{i+j}, \hspace{1.2cm} \text{ for } 1 \leq i \leq j \leq m-1; \ i + j < m \\
X_i + X_j + 1 \geq X_{i+j-m}, \text{ for } 1 \leq i \leq j \leq m-1; \ i + j > m.
\end{cases}
\label{system}
\end{equation}

In particular, we conclude that the Kunz coordinates of a gapset with multiplicity $m$ also must satisfy the system (\ref{system}).

\begin{example}
The tuple $(2,3,3,1)$ lists the Kunz coordinates of the gapset $\{1, 2, 3, 4, 6, 7, 8, 12, 13\}$. However, the tuple $(1,3,3,2)$ does not list the Kunz coordinates of any gapset, since $k_1 + k_1 = 2 < 3 = k_2$. 
\label{naoexemplo}
\end{example}

The \textit{canonical partition} of a gapset $G$ was introduced in \cite{EF} as 
$$G = G_0 \cup G_1 \cup \ldots \cup G_{q-1},$$ 
where $G_0 = [1, m-1] \cap \Z$ and $G_{i+1} \subseteq G_i + m$. Basically, it is a clipping of the set $G$ into $q$ parts where each part lies in an interval of integers of the type $[am + 1, (a+1)m - 1] \cap \Z$, for some $a \in \N$.

\begin{proposition}
Let $G$ be a gapset, with $\Kunz(G) = (k_1, k_2, \ldots, k_{m-1})$, genus $g$ and depth $q$. Then $g = \sum_{i=1}^{m-1} k_i$ and $q = \max\{k_i: 1 \leq i \leq m-1\}$.
\label{genus_depth_kunz}
\end{proposition}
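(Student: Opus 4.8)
The identity for the genus is immediate from Proposition \ref{Apery}. Since $m \in \N_0 \setminus G$ and $\N_0 \setminus G$ is closed under addition, every positive multiple of $m$ lies in $\N_0 \setminus G$; hence no element of $G$ is congruent to $0$ modulo $m$, and $G$ is the disjoint union of the sets $\{z \in G : z \equiv i \pmod m\}$ for $i \in [1, m-1] \cap \Z$. Taking cardinalities and invoking the formula $k_i = \#\{z \in G : z \equiv i \pmod m\}$ from Proposition \ref{Apery} gives $g = \#G = \sum_{i=1}^{m-1} k_i$.

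For the depth, I may assume $G \neq \emptyset$ (the empty gapset being trivial); then $1 \in G$, so $m \geq 2$ and $c(G) = \max(G) + 1$. Write $q' := \max\{k_i : 1 \leq i \leq m-1\}$. Recall from the proof of Proposition \ref{Apery} that, for each $i$,
$$G \cap \{n \in \N : n \equiv i \pmod m\} = \{i,\ i+m,\ \ldots,\ i + (k_i-1)m\};$$
in particular this set is empty when $k_i = 0$, and otherwise its largest element is $i + (k_i - 1)m$. Consequently $\max(G) = \max\{\, i + (k_i - 1)m : 1 \leq i \leq m-1,\ k_i \geq 1 \,\}$.

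The plan is now to bound $c(G) = \max(G) + 1$ from both sides and apply the ceiling. On one hand, every index $i$ with $k_i \geq 1$ satisfies $i + (k_i - 1)m \leq (m-1) + (q'-1)m = q'm - 1$, so $\max(G) \leq q'm - 1$ and hence $c(G) \leq q'm$, which gives $q(G) = \lceil c(G)/m \rceil \leq q'$. On the other hand, choosing $i_0$ with $k_{i_0} = q'$ we get $i_0 + (q'-1)m \in G$, so $c(G) \geq i_0 + (q'-1)m + 1 > (q'-1)m$, which forces $\lceil c(G)/m \rceil \geq q'$. Combining the two estimates yields $q(G) = q' = \max\{k_i : 1 \leq i \leq m-1\}$.

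There is no serious obstacle in this argument: it is essentially a direct translation of Proposition \ref{Apery} together with the description of each residue class of $G$ as an arithmetic progression. The only points that require a little care are the observation that the residue class $\equiv 0 \pmod m$ contributes nothing to $G$, and the correct handling of the ceiling function in the two-sided estimate for $c(G)$.
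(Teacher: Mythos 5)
Your argument is correct. The genus identity is handled exactly as in the paper (summing the cardinalities $k_i = \#\{z \in G : z \equiv i \pmod m\}$ over the nonzero residue classes). For the depth, however, you take a more direct route than the paper: you work straight from the definitions $c(G) = \max(G) + 1$ and $q(G) = \lceil c(G)/m(G) \rceil$, observe that $\max(G) = \max\{i + (k_i-1)m : k_i \geq 1\}$ from the arithmetic-progression description of each residue class, and then squeeze $\lceil c(G)/m \rceil$ between $q'$ and $q'$ by a two-sided estimate. The paper instead invokes the canonical partition $G = G_0 \cup \cdots \cup G_{q-1}$, picks $x \in G_{q-1}$, and identifies the depth with the number of elements of $G$ congruent to $x$ modulo $m$ — which silently uses the fact (from Eliahou--Fromentin) that the canonical partition has exactly $q$ parts. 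Your version avoids that auxiliary fact entirely and makes the ceiling computation explicit, which is arguably more self-contained; the paper's version is shorter and makes the combinatorial picture (each residue class occupies an initial segment of the parts, and the deepest class has $q$ elements) more visible. One small remark: for a nonempty gapset every $k_i$ with $1 \leq i \leq m-1$ satisfies $k_i \geq 1$, since $[1,m-1] \cap \Z \subseteq G$, so your caveat about $k_i = 0$ is vacuous, though harmless.
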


\begin{proof}
By Proposition \ref{Apery}, $G \cap \{n \in \N: n \equiv i \pmod{m}\}$ has $k_i$ elements, if $i \neq 0$ and $G \cap m\N = \emptyset$. Thus the formula for the the genus can be obtained by summing up the coordinates of $\Kunz(G)$. Consider the canonical partition of $G$, $G_0 \cup G_1 \cup \cdots \cup G_{q-1}$ and let $x \in G_{q-1}$ (that exists, since the depth of $G$ is $q$). In particular, $x - \l m \in G$ for all $\l \in \N_0$ such that $x -\l m > 0$ and there is exactly one element in each $G_i$ that is congruent to $x$ modulo $m$. Using Proposition \ref{Apery} again, we conclude that the depth of $G$ (which is also the quantity of parts of the canonical partition of $G$) coincides with the quantity of elements that are congruent to $x$ modulo $m$. Hence, $q = k_t$, where $t = \min\{x - \l m: x - \l m > 0, \l \in \Z\}$, i.e., $t$ is the remainder when $x$ is divided by $m$.
\end{proof}

Now, we introduce a new parameter of a gapset, which is its level. Let $S$ be a numerical semigroup with multiplicity $m > 1$. The \textit{ratio} of $S$, $r(S)$, can be defined as \linebreak $\min\{s \in S: s \not\equiv 0 \pmod {m}\}$ (see \cite{GS-R} for more details). Thus, the ratio of a gapset $G$ with multiplicity $m$ is $r(G) := \min\{x \notin G: x \not\equiv 0 \pmod {m}\}$. Finally, the \textit{level} of a gapset $G$ is $\lambda(G) := \left \lfloor \frac{r(G)}{m(G)} \right \rfloor$. The next result allows us to relate the level of a gapset with its Kunz coordinates.

\begin{proposition}
Let $G$ be a gapset, with $\Kunz(G) = (k_1, k_2, \ldots, k_{m-1})$ and level $\lambda$. Then $\lambda = \min\{k_i: 1 \leq i \leq m-1\}$.
\label{level}
\end{proposition}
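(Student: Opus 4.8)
The plan is to translate the definition of $\lambda(G)=\lfloor r(G)/m\rfloor$ into a statement about the Kunz coordinates via Proposition \ref{Apery}. Write $r = r(G)$ and $m = m(G)$, and let $j \in [1,m-1]\cap\Z$ be the remainder of $r$ modulo $m$, so $r = j + am$ for some $a\in\N_0$ and $\lambda = \lfloor r/m\rfloor = a$. Since $r$ is, by definition, the smallest element \emph{not} in $G$ that is not divisible by $m$, the elements $j, j+m, \ldots, j+(a-1)m$ all lie in $G$ and $r = j+am\notin G$; hence by (the proof of) Proposition \ref{Apery} we get $k_j = a = \lambda$. This already shows $\lambda \in \{k_1,\ldots,k_{m-1}\}$, so it remains to prove $\lambda \le k_i$ for every $i\in[1,m-1]\cap\Z$, i.e.\ $k_j \le k_i$ for all $i$.

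First I would record the key consequence of minimality of $r$: for \emph{every} residue $i\in[1,m-1]\cap\Z$ and every $\ell$ with $0 \le \ell \le a-1$, the integer $i+\ell m$ must belong to $G$ — otherwise $i+\ell m$ would be an element not in $G$, not divisible by $m$, and strictly smaller than $r = j+am$ (since $\ell m < am$ forces $i+\ell m < m + am \le j + am$ when... ), contradicting minimality. The one subtlety is the comparison $i + \ell m < j + am$: if $\ell \le a-1$ then $i + \ell m \le (m-1) + (a-1)m = am - 1 < am \le j + am = r$, so indeed $i+\ell m < r$; thus minimality forces $i+\ell m \in G$. By Proposition \ref{Apery}, $G\cap\{n\equiv i \pmod m\} = \{i, i+m, \ldots, i+(k_i-1)m\}$, so $i+\ell m \in G$ for $0\le\ell\le a-1$ means $k_i \ge a = \lambda$. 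Since this holds for all $i$, we conclude $\lambda = \min\{k_i : 1\le i\le m-1\}$.

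One edge case deserves a sentence: if the gapset is empty then $S = \N_0$, $m=1$, and the statement is vacuous (there are no coordinates); the definition of level requires $m>1$, matching the hypothesis implicitly carried by ``$\Kunz(G)=(k_1,\ldots,k_{m-1})$'' being a genuine tuple. Another point to verify is that $r(G)$ is well defined, i.e.\ that there actually exists an element of $S = \N_0\setminus G$ not divisible by $m$; this follows because $S$ contains all sufficiently large integers, in particular infinitely many not divisible by $m$, so $r(G) \le c(G) + m$ or so — harmless to invoke without a precise bound.

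The main obstacle, such as it is, is purely bookkeeping: getting the inequality $i + \ell m < r$ tight enough that minimality of $r$ applies to \emph{all} residues $i$ simultaneously for $\ell$ up to $a-1$, and then reading off $k_i \ge a$ cleanly from Proposition \ref{Apery}. There is no real difficulty beyond being careful that $\lambda = \lfloor r/m\rfloor$ equals the ``number of full rows below $r$'' rather than that number plus one; the floor matches because $r$ itself is a gap, so its own row is not counted.
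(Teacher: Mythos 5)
Your proof is correct and follows essentially the same route as the paper: both identify $\lambda$ with the coordinate $k_j$ for $j \equiv r \pmod{m}$ via Proposition \ref{Apery}, and use minimality of the ratio $r$ to conclude that every other coordinate is at least as large. Your write-up in fact makes explicit the step ($k_i \ge \lambda$ for all $i$, via $i + \ell m < r$ for $\ell \le \lambda - 1$) that the paper's proof compresses into a single ``hence''.
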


\begin{proof}
Notice that the multiplicity of $G$ is $m$. Let $r$ and $\lambda$ be the ratio and the level of $G$, respectively. By the definition of $r$, we have that $\{r - km: r - km > 0, k \in \N\} \subset G$. Moreover, $r$ is the smallest positive integer outside $G$ that does not divide $m$. Hence, the cardinality of $\{r - km: r - km > 0, k \in \N\}$ coincides with the least Kunz coordinate of $G$. Since $\{r - km: r - km > 0, k \in \N\} = [1, \frac{r}{m}] \cap \Z = [1, \lambda] \cap \Z$, the result follows.
\end{proof}

For $\l \in \N$, consider $\Gamma'(g, \l)$ as the set of gapsets with genus $g$ whose Kunz coordinates lie in $[\l, 2\l + 1] \cap \Z$. This set can be characterized in terms of the level and the depth of a gapset, as follows:
$$\Gamma'(g, \l) = \{G \in \Gamma(g): \l \leq \lambda(G) \leq q(G) \leq 2\l + 1\},$$
and we denote by $n'_{g,\l}$ its cardinality. In particular, $\Gamma'(g,1) = \Gamma'(g)$ and $n'_{g,1} = n'_{g}$. Moreover, $\Gamma'(g, \l) = \emptyset$, if $\l > g$ and $n'_{g, \l} = 0$ in this case. We deal the numbers $n'_{g,\l}$ in the next section.

\section{A generalization of Theorem \ref{EliFro}}

In this section, we present the main result of this paper, which is a generalization of Theorem \ref{EliFro}.

\begin{theorem}
Let $g$ and $\l$ be positive integers and $(n'_{g,\l})$ be the sequence of the number of gapsets with genus $g$ whose Kunz coordinates lie in $[\l, 2\l + 1] \cap \Z$. Then 
$$\sum_{i = \l}^{2\l} n'_{g - i, \l} \leq n'_{g,\l} \leq \sum_{i = \l}^{2\l+1}  n'_{g - i, \l}.$$
\label{EliFroGen}
\end{theorem}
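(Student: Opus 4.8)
The plan is to mimic the Eliahou–Fromentin strategy, but carry out the bookkeeping entirely in Kunz coordinates, where the conditions ``$\l\le\lambda(G)$'' and ``$q(G)\le 2\l+1$'' become the clean statement that every coordinate $k_i$ lies in $[\l,2\l+1]\cap\Z$ (Propositions \ref{genus_depth_kunz} and \ref{level}). Throughout, a gapset in $\Gamma'(g,\l)$ is identified with its tuple $(k_1,\dots,k_{m-1})$ satisfying the system \eqref{system} together with $k_i\in[\l,2\l+1]$ for all $i$; note adding a new last coordinate equal to $\l$ or $\l+1$ corresponds to incrementing the multiplicity, while the genus is always $\sum k_i$.

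\emph{Lower bound.} For each $i\in\{\l,\l+1,\dots,2\l\}$ I would build an injection $\psi_i\colon \Gamma'(g-i,\l)\to\Gamma'(g,\l)$ whose images are pairwise disjoint, giving $\sum_{i=\l}^{2\l} n'_{g-i,\l}\le n'_{g,\l}$. The natural candidate is: given $(k_1,\dots,k_{m-1})\in\Gamma'(g-i,\l)$, append a new coordinate $k_m:=i$, producing a tuple of length $m$ and genus $(g-i)+i=g$. One must check the new tuple still satisfies \eqref{system} — the inequalities involving the new index $m$ reduce to $k_j+k_m\ge k_{j+m}$ type relations which, because every coordinate is at least $\l$ and at most $2\l+1$, follow from $\l+\l\ge\l$ and $\l+\l+1\ge 2\l+1$; this is exactly why the window $[\l,2\l+1]$ is the right one. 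The images are disjoint across different $i$ because $i$ is recoverable as the last Kunz coordinate, and injectivity of each $\psi_i$ is clear. (One should double-check the edge case where appending changes which element is the multiplicity, but since all coordinates are $\ge\l\ge 1$ the multiplicity of the image is genuinely $m+1$.)

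\emph{Upper bound.} Here I would partition $\Gamma'(g,\l)$ according to the value of the last Kunz coordinate $k_{m-1}$, or more robustly according to $\lambda(G)=\min_i k_i$ and the position of a minimizing coordinate. For the Eliahou–Fromentin analogue one splits into pieces $\Gamma'_i(g,\l)$ for $i\in\{\l,\dots,2\l+1\}$ and constructs an injection $\phi_i\colon\Gamma'_i(g,\l)\to\Gamma'(g-i,\l)$; summing gives $n'_{g,\l}\le\sum_{i=\l}^{2\l+1}n'_{g-i,\l}$. The map $\phi_i$ should, roughly, delete a suitable coordinate (the last one, when it equals $i$, or the unique smallest one) and repair the tuple so that \eqref{system} is restored and all coordinates stay in $[\l,2\l+1]$; the genus drops by exactly $i$. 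The delicate point is defining the partition so that exactly one case applies to each gapset and so that the deletion in that case lands in $\Gamma'(g-i,\l)$ rather than violating the system — in particular when the smallest coordinate is interior (not the last one), deleting it may break inequalities $k_a+k_b\ge k_{a+b}$ and one needs the $2\l+1$ upper bound to absorb the damage, or one instead performs a ``merge'' of two coordinates. Getting the three (or $\l+2$) injective maps to have the correct genus shift \emph{and} respect the coordinate window \emph{and} tile the domain is the main obstacle; it is essentially a careful case analysis on where $\min_i k_i$ is attained and whether $m-1$ itself is a minimizer, and I expect most of the work to be verifying \eqref{system} for the modified tuples in each case.

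Finally, I would record the base/degenerate cases: when $\l>g$ both sides are $0$; when $g-i\le 0$ the corresponding term is $0$ and the associated map has empty domain, so the inequalities hold vacuously in those ranges. Specializing $\l=1$ recovers Theorem \ref{EliFro} exactly, which is a useful consistency check on the constructions.
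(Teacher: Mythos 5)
Your lower bound coincides with the paper's argument and is essentially complete: append a last Kunz coordinate equal to $i\in[\ell,2\ell]$, observe the images are distinguished by that coordinate, and check the new instances of the system (\ref{system}) using the window. One small imprecision there: the binding new constraint is $k_a+k_b\ge k_m=i$ for $a+b=m$, which needs $2\ell\ge i$ (this is exactly why the appended value must stop at $2\ell$ and the lower sum runs only to $2\ell$); your cited inequality ``$\ell+\ell\ge\ell$'' is not the one doing the work.

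The genuine gap is in the upper bound, which you leave as a plan rather than a proof, and where you drift toward an unnecessary and harder construction. The paper simply partitions $\Gamma'(g,\ell)$ by the value $z\in[\ell,2\ell+1]$ of the last Kunz coordinate (every gapset with $g\ge 1$ has a well-defined last coordinate, so this is an honest partition into $\ell+2$ classes) and deletes that coordinate. No ``repair,'' no analysis of where $\min_i k_i$ is attained, and no merging of coordinates is needed: for the truncated tuple $(k_1,\dots,k_{m-1})$ the inequalities $k_i+k_j\ge k_{i+j}$ with $i+j<m$ are inherited verbatim, and every wrap-around inequality $k_i+k_j+1\ge k_{i+j-m}$ holds automatically because $k_i+k_j+1\ge 2\ell+1\ge k_{i+j-m}$ — the same window argument as in the lower bound. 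Your worry that ``deleting it may break inequalities $k_a+k_b\ge k_{a+b}$'' only arises for your alternative proposal of deleting an interior minimizing coordinate, which is precisely the route you should not take; the ``delicate point'' you flag dissolves once you commit to the last-coordinate partition. As written, the upper half of the theorem is not established, since the one verification that constitutes its entire content (that truncation preserves (\ref{system}) and lands in $\Gamma'(g-z,\ell)$) is never carried out.
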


\begin{proof}
Let $\Gamma'(g, \l)$ be the set of gapsets with genus $g$ whose Kunz coordinates lie in $[\l, 2\l + 1] \cap \Z$. First, we identify the gapsets of $\Gamma'(g, \l)$ with tuples whose coordinates lie in $[\l, 2\l + 1] \cap \Z$, the sum of its coordinates is $g$ (cf. Proposition \ref{genus_depth_kunz}) and that satisfies the system (\ref{system}). 

Now we prove the first inequality by considering the following functions: for \linebreak $z \in [\l, 2\l] \cap \Z$, consider $f_z: \Gamma'(g-z, \l) \to \Gamma'(g, \l)$, which are described by the Kunz coordinates. If $G$ has multiplicity $m$ and $\Kunz(G) = (k_1, k_2, \ldots, k_{m-1})$, then we define \linebreak $\Kunz(f_z(G)) = (k_1, k_2, \ldots, k_{m-1}, z)$. Notice that all the functions are injective and their images are disjoint sets since the last coordinate of $\Kunz(f_z(G))$ and $\Kunz(f_w(G))$ are different, if $z \neq w$. Also, if $G \in \Gamma'(g-z, \l)$ and $\Kunz(G) = (k_1, \ldots, k_{m-1})$, then the genus of $G$ is $\sum_{i=1}^{m-1} k_i = g - z$ (cf. Proposition \ref{genus_depth_kunz}), $k_i \in [\l, 2\l+1] \cap \Z$, for all $i$, the genus of $f_z(G)$ is $g$ and its coordinates lie in $[\l, 2\l + 1] \cap \Z$. It remains to show that $(k_1, \ldots, k_{m-1}, k_m)$, where $k_m = z$, satisfies the system (\ref{system}). If $1 \leq i \leq j \leq m-1$ and $i + j < m$, then $k_i + k_j \geq k_{i+j}$ (by hypothesis). If $1 \leq i \leq j \leq m-1$ and $i + j = m$, then $k_i + k_j \geq 2\l$ and $z \leq 2\l$; thus $k_i + k_j \geq k_{i+j} = z$. Finally, if $1 \leq i \leq j \leq m$ and $i + j > m+1$, then $k_i + k_j + 1 \geq 2\l + 1$ and $k_{i+j - m} \leq 2\l + 1$; thus $k_i + k_j + 1 \geq k_{i+j - m}$.

Now we prove the second inequality. Let $z \in [\l, 2\l + 1] \cap \Z$ and denote by $\Gamma'_z(g,\l)$ the set of gapsets with genus $g$ whose Kunz coordinates lie in $[\l, 2\l + 1] \cap \Z$ and its last Kunz coordinate is equal to $z$. Notice that the set $\Gamma'(g, \l)$ is the (disjoint) union of the sets $\Gamma'_z(g,\l)$.  For $z \in [\l, 2\l + 1] \cap \Z$, consider the function $h_z: \Gamma_z'(g, \l) \to \Gamma'(g-z, \l)$, which is described by the Kunz coordinates. If $\Kunz(G) = (k_1, k_2, \ldots, k_{m-1}, k_m)$, with $k_m = z$, then we define $\Kunz(h_z(G)) = (k_1, k_2, \ldots, k_{m-1})$. Notice that this function is injective and if $G \in \Gamma_z'(g, \l)$, then the genus of $h_z(G)$ is $g - z$  (cf. Proposition \ref{genus_depth_kunz}). It remains to show that $(k_1, \ldots, k_{m-1})$ satisfy the system (\ref{system}). If $1 \leq i \leq j \leq m-1$ and $i + j < m$, then $k_i + k_j \geq k_{i+j}$ (by hypothesis). Finally, if $1 \leq i \leq j \leq m-1$ and $i + j > m$, then $k_i + k_j + 1 \geq 2\l + 1$ and $k_{i+j-m} \leq 2\l + 1$; thus $k_i + k_j + 1 \geq k_{i+j - m}$ and we are done.
\end{proof}

Some values of $n'_{g, \l}$ are listed at Table \ref{g,l}. We observe that if $g$ is even, then $n'_{g, \frac{g}{2}} = 2$ and if $\frac{g}{2} < \l \leq g$, then $n'_{g, \l} = 1$, since $\Gamma'(g, \frac{g}{2}) = \{G: \Kunz(G) = (\frac{g}{2}, \frac{g}{2}) \text{ or } \Kunz(G) =  (g)\}$ and if $\frac{g}{2} < \l \leq g$, then $\Gamma'(g, \l) = \{G: \Kunz(G) =  (g)\}$. Notice that there are gapsets that do not lie in any of the sets $\Gamma'(g, \l)$. It occurs with the gapset $G$ such that $\Kunz(G) = (4, 2, 1)$, for example. Moreover, there are gapsets that lie in several of the sets $\Gamma'(g, \l)$. For instance, if $\Kunz(G) = (g)$, then $G \in \Gamma'(g, \l)$, for all $\l \in \left[\left \lfloor \frac{g}{2} \right \rfloor, g \right]$.

\setlength{\tabcolsep}{0.75em} % for the horizontal padding
{\renewcommand{\arraystretch}{1.4}% for the vertical padding
\begin{table}[h!]
\caption{A few values for $n'_{g,\l}$.}
\label{g,l}
\begin{tabular}{|c|c c c c c c c c c c|}
 \hline
\diagbox[height=0.9cm]{$g$}{$\l$} & $1$ & $2$ & $3$ &$4$ & $5$ & $6$ & $7$ & 8 & 9 & 10 \\
\hline
1 & 1 &  &  &  &  &  &  &  & & \\
\hline
2 & 2 & 1 &  &  &  &  &  &  & & \\
\hline
3 & 4 & 1 & 1 &  &  &  &  &  & & \\
\hline
4 & 6 & 2 & 1 & 1 &  &  &  &  & & \\
\hline
5 & 11 & 3 & 1 & 1 & 1 &  &  &  & & \\
\hline
6 & 20 & 4 & 2 & 1 & 1 & 1 &  &  & & \\
\hline
7 & 33 & 6 & 3 & 1 & 1 & 1 & 1 &  & & \\
\hline
8 & 57 & 10 & 3 & 2 & 1 & 1 & 1 & 1 & & \\
\hline
9 & 99 & 14 & 5 & 3 & 1 & 1 & 1 & 1 & 1 & \\
\hline
10 & 168 & 22 & 7 & 3 & 2 & 1 & 1 & 1 & 1 & 1 \\
\hline
\end{tabular}
\end{table}

\begin{remark}
If $\l = 1$ in Theorem \ref{EliFroGen}, then we obtain the statement of Theorem \ref{EliFro}. We observe that all the tuples with coordinates 1, 2 or 3 satisfy the condition \linebreak $X_i + X_j + 1 \geq X_{i+j}$ of the system (\ref{system}). For tuples with at least one coordinate greater than 3, this statement may not be true and it can be a reason for the difficulty of constructing (injective) maps that preserve the gapset property. In particular, the functions considered in this paper cannot be extended to gapsets with fixed genus and depth greater than 3. For instance, if $\Kunz(G) = (4, 2, 1)$, then the tuples obtained by adding $1$ and $2$ as the last coordinate are $(4, 2, 1, 1)$ and $(4, 2, 1, 2)$, respectively, but they are not the Kunz coordinates of any gapset (in both cases, $2k_3 + 1 = 3 < 4 = k_1$). Also, if $\Kunz(G) = (4, 1, 4)$, then the tuple obtained removing the last coordinate is $(4, 1)$, but it does not list the Kunz coordinates of any gapset ($2k_2 + 1 = 3 < 4 = k_1$).
\end{remark}

{\bf Acknowledgment.} We thank the anonymous referee for her/his careful corrections, suggestions and comments that helped to improve this version of the paper, specially for pointing out that the proof of Theorem \ref{EliFro} using Kunz coordinates could be generalized to obtain Theorem \ref{EliFroGen}. We thank Shalom Eliahou for nice discussions on this subject. We also thank the editor (Nathan Kaplan) for his careful suggestions and corrections.

\end{document}